\newtheorem{theorem}{Theorem}[section]
\theoremstyle{plain}
\newtheorem{corollary}{Corollary}[section]
\newtheorem{proposition}{Proposition}[section]
\numberwithin{equation}{section}
\theoremstyle{definition}
\newtheorem{definition}{Definition}
\theoremstyle{remark}
\newcommand{\VMO}{\mathrm{VMO}}
\title[Hamiltonian stationary equations]{Regularity for Hamiltonian stationary equations in $\mathbb{R}_{n\leq 4}^{n}$
}
\author{Arunima Bhattacharya}
\address{Department of Mathematics, Phillips Hall\\
 the University of North Carolina at Chapel Hill, NC }
\email{arunimab@unc.edu}
\begin{document}


\begin{abstract}
In this paper, we study the regularity of solutions to the Hamiltonian stationary equation in complex Euclidean space. We show that in dimensions $n\leq 4$, for all values of the Lagrangian phase, any $C^{1,1}$ solution is smooth and derive a $C^{k,\alpha}$ estimate for it, where $k \geq 2$.

\end{abstract}

\maketitle

\section{Introduction}\label{intro}

We study the fourth order Hamiltonian stationary equation, in complex Euclidean space, which locally govern Hamiltonian stationary submanifolds: Let $L$ be a Lagrangian submanifold of a symplectic manifold $(M, \omega)$ with a Riemannian metric $g$ compatible with $\omega$ in the
sense that $\omega(X, Y) = g(JX, Y)$ for an almost complex structure $J$ on $M$. $L$ is  Hamiltonian stationary if its mean curvature
1-form $\omega(H, \cdot)$ is closed and coclosed, that is a harmonic 1-form on
$L$ with respect to the induced metric from $(M,g)$ \cite{Oh}, \cite[pg 1071-1072]{JLS}. In a Calabi-Yau
manifold $(M,\omega,\Omega)$ of complex dimension $n$, this is further equivalent to a scalar equation: the Lagrangian phase function $\Theta$ 
is harmonic.  Here the holomorphic
$n$-form $\Omega$ satisfies $\Omega\wedge\overline{\Omega}={\omega^{n}%
}/{n!}$ and defines $\Theta$ by $\Omega|_{L} = e^{\sqrt{-1}\Theta}d\mu_{L}$.  
The scalar equation follows from the mean curvature formula \cite{HL,Oh,SWJDG} presented below.

In $\mathbb{C}^{n}$, with the standard K\"ahler structure, $\Theta$ can be expressed as the following 
\[
\Theta=\sum_{i=1}^n \arctan \lambda_i
\]
where $u$ is the potential function for a local graphical
representation $L_u = (x,Du)$ and $\lambda_i$ are the eigenvalues of the Hessian of $u$.  The mean curvature vector along
$L_u$ can be written as
\[
\vec{H}=J\nabla_g\Theta
\]
where $\nabla_g$ is the gradient operator of $L_{u}$ (see \cite[(2.19)]{HL}, and $g$ is the induced metric from the Euclidean metric
on $\mathbb{C}^{n}$, which can be written as
\[
g=I_n+(D^{2}u)^{2}.
\]
In this case, the Hamiltonian stationary equation  can be decomposed into two second order equations, given by
\begin{equation}
\Delta_g\Theta=0 \label{hstat0}.
\end{equation}

Our main result, presented below, is a lower-dimensional version ($n \leq 4$) of Theorem 1.2 from Chen–Warren \cite{ChenWarren}, obtained without imposing the additional assumptions specified in (1.6), (1.7), and (1.8) of their paper.

\begin{theorem}\label{main1}
    Suppose that $u\in C^{1,1}(B_1)$ is a weak solution of \eqref{hstat0} on the unit ball $B_1\subset \mathbb{R}^n$ where $n\leq 4$. Then for $k\geq 2$, we have 
    \begin{equation*}
        ||u||_{C^{k, \alpha}(B_{1/2})}\leq C(\alpha, ||D^2u||_{L^{\infty}(B_1)}) .
    \end{equation*}
\end{theorem}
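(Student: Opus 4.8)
\emph{Strategy.} The key structural feature I would exploit is that \eqref{hstat0} couples two second–order equations: the \emph{linear} equation $\Delta_g\Theta=0$ for the phase $\Theta$, whose coefficients are built from $g=I_n+(D^2u)^2$, and the \emph{fully nonlinear} equation $\sum_{i}\arctan\lambda_i=\Theta$ relating $\Theta$ to the Hessian of $u$. The plan is to alternate between the two: use the first to upgrade the regularity of $\Theta$, feed that into the second to upgrade the regularity of $D^2u$, and iterate until $u$ is smooth.

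\emph{Base step.} Since $u\in C^{1,1}(B_1)$, the eigenvalues $\lambda_i$ of $D^2u$ are bounded by $\Lambda:=\|D^2u\|_{L^\infty(B_1)}$, so $|\Theta|<n\pi/2$ and $I_n\le g\le(1+\Lambda^2)I_n$; in particular $g$ is uniformly elliptic with $L^\infty$ coefficients. Writing $\Delta_g\Theta=0$ in divergence form $\partial_i(\sqrt{\det g}\,g^{ij}\partial_j\Theta)=0$, the coefficients $\sqrt{\det g}\,g^{ij}$ are bounded, measurable, and uniformly elliptic, while $\Theta$ is a bounded weak solution. By the De Giorgi–Nash–Moser theorem I obtain $\Theta\in C^{\alpha}_{\mathrm{loc}}(B_1)$ together with an estimate depending only on $\Lambda$.

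\emph{Main step (first gain for $D^2u$).} Reading $\sum_i\arctan\lambda_i=\Theta$ as a special–Lagrangian–type equation $F(D^2u)=\Theta$, uniformly elliptic on $\{|\lambda_i|\le\Lambda\}$ with $\Theta\in C^\alpha$, I expect this to be the main obstacle: the phase operator $F$ is \emph{not} concave for arbitrary values of $\Theta$, so Evans–Krylov is unavailable, and the restriction $n\le4$ enters precisely here. The mechanism I would use is a blow–up/compactness argument: if the mean oscillation of $D^2u$ failed to decay at some interior point, rescaling would produce an entire solution of $F(D^2v)=\mathrm{const}$ with bounded Hessian, which a Bernstein/Liouville–type rigidity theorem valid for $n\le4$ (needing only the $C^{1,1}$ bound and no phase hypothesis) forces to be quadratic — a contradiction. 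Hence $D^2u\in\VMO_{\mathrm{loc}}$, so $g=I_n+(D^2u)^2\in\VMO\cap L^\infty$, and the Calderón–Zygmund theory for elliptic equations with VMO coefficients applied to $\Delta_g\Theta=0$ yields $\Theta\in W^{2,p}_{\mathrm{loc}}$ for all $p<\infty$, hence $\Theta\in C^{1,\beta}_{\mathrm{loc}}$ by Sobolev embedding. Feeding this continuous right–hand side back into a sharpened blow–up then upgrades $D^2u$ to $C^{\alpha'}_{\mathrm{loc}}$, i.e. $u\in C^{2,\alpha'}_{\mathrm{loc}}$, with the quantitative estimate.

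\emph{Bootstrap.} Once $D^2u\in C^{\alpha'}$, the metric $g=I_n+(D^2u)^2$ and hence the coefficients $\sqrt{\det g}\,g^{ij}$ lie in $C^{\alpha'}$, so divergence–form Schauder theory for $\partial_i(\sqrt{\det g}\,g^{ij}\partial_j\Theta)=0$ gives $\Theta\in C^{1,\alpha'}$. Differentiating $F(D^2u)=\Theta$ in a direction $e$ yields the linear equation $F^{ij}(D^2u)\,\partial_{ij}(\partial_e u)=\partial_e\Theta$, whose coefficients $F^{ij}(D^2u)$ are in $C^{\alpha'}$ and whose data $\partial_e\Theta$ is in $C^{\alpha'}$; non–divergence Schauder then gives $\partial_e u\in C^{2,\alpha'}$, i.e. $u\in C^{3,\alpha'}$. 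Repeating this alternation — each round improving the regularity of the coefficients and the data by one derivative — upgrades $u$ to $C^{k,\alpha}$ for every $k\ge2$, with the asserted estimate depending only on $\alpha$ and $\|D^2u\|_{L^\infty(B_1)}$, and hence shows that $u$ is smooth.
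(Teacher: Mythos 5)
Your overall strategy coincides with the paper's: De Giorgi--Nash applied to the uniformly elliptic divergence-form equation $\Delta_g\Theta=0$ gives $\Theta\in C^{\alpha}$; a blow-up/compactness argument combined with a Liouville-type rigidity theorem for entire solutions of the uniformly elliptic special Lagrangian equation (this is exactly where $n\le 4$ enters, as you say) gives $D^2u\in\VMO$; and your final bootstrap, alternating divergence-form Schauder for $\Theta$ with non-divergence Schauder for the linearized equation, matches the paper's (which runs it with difference quotients, since at the $C^{2,\alpha}$ stage one cannot literally differentiate the equation). Two points you gloss over: the paper must first invoke Lions' result that the $C^{1,1}$ weak solution is a viscosity solution of $\sum_i\arctan\lambda_i=\Theta(x)$ before any viscosity-solution compactness machinery applies; and for $n=4$ the entire Liouville theorem you cite is not an off-the-shelf classical fact --- the paper assembles it from the monotonicity formula, tangent-cone analysis, Allard regularity, and a recent rigidity theorem for degree-two homogeneous solutions (only the $n=3$ case is Yuan's classical argument).

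The genuine gap is at the passage from $D^2u\in\VMO$ to $u\in C^{2,\alpha'}$. First, your intermediate claim is false: for the divergence-form equation $\partial_i\bigl(\sqrt{\det g}\,g^{ij}\partial_j\Theta\bigr)=0$ with coefficients that are merely $\VMO\cap L^\infty$, Calder\'on--Zygmund theory (Di Fazio) yields interior $W^{1,p}$ estimates for $\Theta$, not $W^{2,p}$; the $W^{2,p}$ theory with VMO coefficients (Chiarenza--Frasca--Longo) applies to non-divergence form equations, and you cannot rewrite the equation in non-divergence form without differentiating the coefficients, i.e.\ without control on $D^3u$, which you do not yet have. So at this stage you only get $\Theta\in C^{\beta}$, no better than what De Giorgi--Nash already gave. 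Second, and more seriously, the step you describe as ``a sharpened blow-up upgrades $D^2u$ to $C^{\alpha'}$'' is precisely the hard quantitative theorem on which the whole proof hinges: compactness gives decay of mean oscillation at each point (that is, VMO), but a H\"older estimate for $D^2u$ requires a geometric rate of decay, i.e.\ a Caffarelli-type perturbation theory, and since the operator is not concave for subcritical phase (as you yourself note, Evans--Krylov is unavailable) this is not routine. The paper does not obtain it by blow-up at all; it invokes a specific theorem ([BS1, Theorem 2.2]) giving interior $C^{2,\alpha}$ estimates for viscosity solutions of uniformly elliptic fully nonlinear equations with $C^{\alpha}$ right-hand side once the Hessian is known to be VMO. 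Without that input, or an actual proof of your ``sharpened blow-up,'' the argument does not close.
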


As a corollary, in higher dimensions ($n>4$), we get the following.

\begin{corollary}\label{cor1}
  Suppose that $u\in C^{1,1}(B_1)$ is a weak solution of \eqref{hstat0} on the unit ball $B_1\subset \mathbb{R}^n$ where $n>4$ and $\arctan\lambda_{\min}>(\Theta-\pi)/n$, where $\lambda_{\min}$ is the smallest eigenvalue of the Hessian. Then we have \begin{equation*}
        ||u||_{C^{k, \alpha}(B_{1/2})}\leq C(\alpha, ||D^2u||_{L^{\infty}(B_1)}) .
    \end{equation*}
\end{corollary}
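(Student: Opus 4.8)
The plan is to show that the corollary is not an independent result but a transfer of the proof of Theorem \ref{main1}: the phase hypothesis supplies, in every dimension, the single structural ingredient that the restriction $n\le 4$ provided for free in Theorem \ref{main1}. First I would recall the architecture behind Theorem \ref{main1}. Since $u\in C^{1,1}$, the induced metric $g=I_n+(D^2u)^2$ has bounded measurable, uniformly elliptic coefficients, and $\Theta=\sum_i\arctan\lambda_i$ is a bounded function of $D^2u$ satisfying, in the double-divergence weak sense, the divergence-form equation $\partial_i\bigl(\sqrt{\det g}\,g^{ij}\partial_j\Theta\bigr)=0$ associated to \eqref{hstat0}. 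The proof then splits into (i) an \emph{initial gain}, promoting the $C^{1,1}$ weak solution to a H\"older phase and a H\"older Hessian through the coupled pair $\sum_i\arctan\lambda_i=\Theta$ and $\Delta_g\Theta=0$, and (ii) a \emph{Schauder bootstrap}, applying linear Schauder theory alternately to the two second-order equations to climb from $C^{2,\alpha}$ to $C^{k,\alpha}$. Step (ii) is purely local and dimension-free; the restriction $n\le 4$ is used only to close the a priori estimate in step (i).

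Next I would isolate the role of the hypothesis. Rewriting it, the condition $\arctan\lambda_{\min}>(\Theta-\pi)/n$ is exactly
\[
\sum_{i=1}^n\bigl(\arctan\lambda_i-\arctan\lambda_{\min}\bigr)<\pi ,
\]
a bound on the total spread of the Lagrangian angles. This is the configuration constraint that rules out the widely-oscillating Hessians behind the known singular examples for the second-order Lagrangian phase operator, and it is automatic in the isotropic and low-spread regimes. The key lemma I would prove is that this spread bound forces, at every point of $B_1$ and uniformly under the normalization $\|D^2u\|_{L^\infty(B_1)}\le M$, the same Jacobi-type a priori Hessian estimate for the Lagrangian operator that drives step (i). I would establish it by a pointwise computation: differentiating $\sum_i\arctan\lambda_i=\Theta$ twice along $g$ produces a quadratic form with diagonal weights $\lambda_i/(1+\lambda_i^2)^2$ constrained by the gradient relation $\sum_i (1+\lambda_i^2)^{-1}(\cdot)_i$ coming from $\nabla\Theta$, and the spread bound is precisely what keeps this constrained form controlled, so that a suitable auxiliary quantity, such as the volume density $\tfrac12\log\det g$, is a subsolution of the linearized operator with a favorable sign that is independent of $n$.

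With the lemma in hand, step (i) of Theorem \ref{main1} runs verbatim for $n>4$: De Giorgi--Nash--Moser applied to the divergence-form equation gives $\Theta\in C^\alpha_{\mathrm{loc}}(B_1)$, and the dimension-free Hessian estimate upgrades this to $D^2u\in C^\alpha_{\mathrm{loc}}$, hence $g\in C^\alpha$. Feeding $g\in C^\alpha$ back into $\partial_i\bigl(\sqrt{\det g}\,g^{ij}\partial_j\Theta\bigr)=0$ yields $\Theta\in C^{1,\alpha}$ by Schauder, then $u\in C^{3,\alpha}$ from $\sum_i\arctan\lambda_i=\Theta$, and iterating the dimension-free step (ii) produces $\|u\|_{C^{k,\alpha}(B_{1/2})}\le C(\alpha,\|D^2u\|_{L^\infty(B_1)})$ for every $k\ge 2$. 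The main obstacle is exactly the key lemma: one must verify that the spread bound $\sum_i(\arctan\lambda_i-\arctan\lambda_{\min})<\pi$ is genuinely sufficient to close the a priori estimate with a constant independent of dimension, and, equally importantly, audit the proof of Theorem \ref{main1} to confirm that $n\le 4$ is invoked nowhere except at the estimate this lemma replaces, so that the transfer to $n>4$ is clean.
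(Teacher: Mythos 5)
Your top-level reading---that the corollary is Theorem \ref{main1} with the phase hypothesis supplying, in all dimensions, the one ingredient that $n\le 4$ provided, and that $\arctan\lambda_{\min}>(\Theta-\pi)/n$ is precisely the angle-spread bound $\sum_i(\arctan\lambda_i-\arctan\lambda_{\min})<\pi$---matches the paper's strategy, and your instinct to audit where $n\le 4$ is actually used is the right one. But your audit lands in the wrong place, and the key lemma you propose to prove is not the correct substitute. In the paper, the dimension restriction enters at exactly one point: inside the compactness proof of Proposition \ref{prop:VMO}, to show that the blow-up limit $v$---an entire viscosity solution of the constant-phase special Lagrangian equation \eqref{slag2} with bounded Hessian---is a quadratic polynomial. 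For $n=3$ this is Yuan's rigidity \cite[Lemma 2.1]{YY} via minimal-cone arguments, and for $n=4$ it is the rigidity of degree-two homogeneous solutions of uniformly elliptic equations \cite[Theorem 1.1]{NV2}. The paper's proof of the corollary is accordingly one line: under $\arctan\lambda_{\min}>(\Theta-\pi)/n$, the same quadratic rigidity holds in every dimension by \cite[Corollary 1.1]{OY}, so Proposition \ref{prop:VMO}---and with it the entire dimension-free chain (De Giorgi--Nash for $\Theta$, Lions' weak-to-viscosity conversion, the $C^{2,\alpha}$ estimate of \cite{BS1} for viscosity solutions with VMO coefficients, and the bootstrap)---goes through verbatim.

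The gap in your proposal is that the lemma you plan to prove---the spread bound forces $\tfrac12\log\det g$ to be a subsolution of the linearized operator, yielding a ``dimension-free Hessian estimate'' that upgrades $\Theta\in C^\alpha$ to $D^2u\in C^\alpha$---neither appears in Theorem \ref{main1} nor can play its role. First, no such pointwise Jacobi-type computation is available at the regularity you have: $u$ is only $C^{1,1}$, and differentiating \eqref{rs} twice presupposes essentially the smoothness that is the conclusion; the paper's passage from H\"older phase to $C^{2,\alpha}$ is instead measure-theoretic (VMO of $D^2u$ by compactness plus Liouville rigidity, then Caffarelli-type perturbation theory with VMO coefficients). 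Second, even granting it, a subsolution property of $\tfrac12\log\det g$ would at best reproduce an interior Hessian bound, which is already hypothesized; it provides no modulus of continuity for $D^2u$, and in the subcritical phase range---the very regime the corollary addresses---the operator has no concavity, so there is no Evans--Krylov mechanism to convert uniform ellipticity into $C^{2,\alpha}$. What the hypothesis genuinely buys is a Liouville-type rigidity theorem for homogeneous solutions (quoted from \cite{OY}), not an a priori pointwise estimate, and that is the single item your transfer argument must cite or prove.
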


Hamiltonian stationary Lagrangian submanifolds can be viewed as fourth order generalizations of special Lagrangian submanifolds, which are volume-minimizing submanifolds, locally governed by the second order fully nonlinear special Lagrangian equation: 
\begin{equation*}
\sum_{i=1}^n\arctan\lambda_i=c, 
    \end{equation*}
i.e., the Lagrangian phase $\Theta$ is constant. A primary problem is to identify submanifolds that are minimal in a given Hamiltonian isotopy class. Unlike unrestricted minimal surfaces, the constraints of this class permit compact submanifolds to be minimal, for example, the Clifford torus in $\mathbb C^n$. The maximum principle, governing minimal surface equations, no longer applies to these fourth order equations, making them challenging to study.

While the second order special Lagrangian equation has been the subject of extensive study leading to significant advances in understanding existence, regularity, singularities and other geometric properties of its solutions, the regularity theory for the fourth order Hamiltonian stationary equation remains far less complete. The fourth order structure of the equation and the absence of a maximum principle introduce substantial difficulties, leaving many fundamental questions unresolved. For a recent survey of progress on regularity theory for special Lagrangian and Hamiltonian stationary equations, we refer the reader to \cite{chen2022regularity}.

In \cite{ChenWarren}, Chen and Warren proved that weak  $C^{1,1}$ solutions
to the Hamiltonian stationary equation \eqref{hstat0} are smooth as long as one of these conditions are met: The Lagrangian phase $\Theta$ lies in the supercrtical range, namely, $|\Theta|\geq (n-2)\frac{\pi}{2}+\delta$ for $\delta>0$; the potential $u$ is strongly convex; or the Hessian bound of $u$ is sufficiently less than 1. As a consequence, they established that any $C^{1}$-regular Hamiltonian
stationary Lagrangian submanifold in $\mathbb{C}^{n}$ is real analytic. 
In \cite{BW2}, it was shown that any $C^{1,1}$ solution of \eqref{hstat0} is smooth and enjoys interior $C^{2,\alpha}$ estimates in two dimensions, without assuming any additional restrictions. For results on the compactness of Hamiltonian stationary submanifolds in $\mathbb{C}^n$ and more general symplectic manifolds, based on curvature and smoothness estimates derived from the regularity theory of the governing fourth order equations, we refer the reader to \cite{chen2024compactification, chen2024compactness}.

In this paper, we generalize the results of \cite{ChenWarren} and the two-dimensional result in \cite{BW2} to three and four dimensions without requiring any further assumptions. The approach taken in \cite{ChenWarren} and later in \cite{BW2} relies on performing a downward Lewy–Yuan rotation, a rotation technique originally introduced by Yuan in \cite{YY06}. Performing this rotation on the gradient graph allows for the Lagrangian phase to fall in the supercritical range.
In this range, the arctangent operator can be modified into a concave one by exponentiating it. This structural concavity is important, as it allows one to apply the well developed regularity theory for uniformly elliptic, fully nonlinear concave (or convex) equations \cite{CC}. However, this approach does not work in dimensions strictly greater than two unless additional assumptions, such as those used in \cite{ChenWarren}, are imposed. The main difficulty is that the modification of the arctangent operator into a concave one is no longer true when the Lagrangian phase is critical value and subcritical, i.e.,
$\Theta\leq (n-2)\frac{\pi}{2}$. This prevents the application of the regularity theory for concave equations. 

Our proof in dimensions $n\leq 4$, relies on the fact that any degree two homogeneous solution, smooth away from the origin, of the uniformly elliptic special Lagrangian equation is a quadratic polynomial. Our proof goes as follows. Due to the uniform bound on the Hessian of $u$, the fourth order Hamiltonian stationary equation is uniformly elliptic. This uniform ellipticity allows one to apply the De Giorgi–Nash Theorem, which yields Hölder continuity of the Lagrangian phase.
With the phase now known to be Hölder continuous, we study the variable phase second order, uniformly elliptic, fully nonlinear equation, given by \eqref{rs}, with a $C^{\alpha}$ right-hand side. Note that equation \eqref{rs} is known as the Lagrangian mean curvature equation. Through a geometric argument involving rigidity results for the special Lagrangian equation, we show that the Hessian of $u$ lies in the space of functions with vanishing mean oscillation, a key step that enables further regularity. By a result of Lions \cite{Lio83}, weak solutions of this equation are also viscosity solutions. We can therefore apply the regularity theory developed in \cite{BS1} for viscosity solutions with VMO coefficients. This yields $C^{2,\alpha}$ interior estimates for $u$, and smoothness follows from further bootstrapping. \\

\noindent \textbf{Acknowledgements.} The author thanks Yu Yuan and Jacob Ogden for helpful comments. 
The author gratefully acknowledges the support of NSF Grant DMS-2350290, the Simons Foundation grant MPS-TSM-00002933, and a Bill Guthridge fellowship from UNC.

\section{Proof of the main result}
We begin by recalling that a function $u$ is said to solve the Lagrangian mean curvature equation if \begin{equation}
        \sum_{i=1}^n\arctan\lambda_i=\Theta(x) \label{rs}
    \end{equation} 
where $\lambda_i$ are the eignevalues of the Hessian. This equation represents the inhomogeneous form of the special Lagrangian equation. It is referred to as the Lagrangian mean curvature equation because, as shown by Harvey and Lawson \cite{HL}, the phase function $\Theta$ serves as the potential for the mean curvature vector of the gradient graph of $u$. 

Next, we recall the following definition.

\begin{definition}
    [VMO]
Let $\Omega\subset\mathbb{R}^n$. A locally integrable function $v$ is in $\VMO(\Omega)$ or has vanishing mean oscillation with modulus $\omega(r,\Omega)$ if
\[\omega(r,\Omega)=\sup_{x_0\in \Omega,0<r\leq R}\frac{1}{|{B_r(x_0)\cap \Omega}|} \int_{B_r(x_0)\cap \Omega} |v(x)-v_{x_0,r}|\rightarrow 0, \text{ as $r\rightarrow 0$}
\]
where $v_{x_0,r}$ is the average of $v$ over $B_r(x_0)\cap \Omega.$

\end{definition}

We establish VMO estimates for the Hessian of the solution of the Lagrangian mean curvature equation.

\begin{proposition}
\label{prop:VMO}
Suppose that $u\in C^{1,1}(B_1)$ is a viscosity solution of  \eqref{rs} in $B_{1}\subset \mathbb R^n$, where $n\in\{3,4\}$ and $\Theta\in C^{\alpha}(B_{1})$. Then $D^2 u \in \VMO(B_{1/2})$ and the VMO modulus of $D^2u$, denoted by $\omega(r)\rightarrow 0$ as $r\rightarrow 0$.
\end{proposition}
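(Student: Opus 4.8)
The plan is to prove a base-point--independent decay of the Hessian oscillation by a blow-up argument in which the limit is governed by the \emph{constant-phase} special Lagrangian equation and is forced to be a quadratic polynomial through the rigidity of homogeneous solutions quoted in the introduction; the only genuinely delicate issue is to pass the oscillation to the limit, which I would handle with an interior $W^{2,\delta}$ estimate rather than the unavailable $C^2$ estimates. First I would reduce the statement to producing a single modulus $\omega(r)\to 0$, independent of $x_0\in B_{1/2}$, bounding $\frac{1}{|B_r(x_0)|}\int_{B_r(x_0)}|D^2u-(D^2u)_{x_0,r}|$, and argue by contradiction: if no such modulus exists there are $\e_0>0$, points $x_k\to x_\ast\in\overline{B_{1/2}}$ and radii $r_k\to 0$ with $\frac{1}{|B_{r_k}(x_k)|}\int_{B_{r_k}(x_k)}|D^2u-(D^2u)_{x_k,r_k}|\ge\e_0$.

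I then rescale by setting $u_k(y)=r_k^{-2}\big(u(x_k+r_ky)-u(x_k)-r_kDu(x_k)\cdot y\big)$, so that $D^2u_k(y)=D^2u(x_k+r_ky)$ inherits the bound $\|D^2u\|_{L^\infty(B_1)}$ and $u_k$ solves \eqref{rs} with phase $\Theta_k(y)=\Theta(x_k+r_ky)$. Writing $F(D^2u)=\sum_i\arctan\lambda_i$, the equation is uniformly elliptic with ellipticity controlled by $\|D^2u\|_{L^\infty}$, since $DF$ has eigenvalues $(1+\lambda_i^2)^{-1}$. Because $\Theta\in C^{\al}$, on every fixed ball $B_R$ we have $\|\Theta_k-\Theta(x_\ast)\|_{L^\infty(B_R)}\to 0$ and $[\Theta_k]_{C^\al(B_R)}=r_k^\al[\Theta]_{C^\al}\to 0$. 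Using the uniform $C^{1,1}$ bound, Arzel\`a--Ascoli, and the stability of viscosity solutions under uniform convergence, I extract $u_k\to u_\infty$ in $C^{1,\beta}_{loc}(\mbb R^n)$, where $u_\infty$ is an entire viscosity solution of the special Lagrangian equation $\sum_i\arctan\lambda_i(D^2u_\infty)=\Theta(x_\ast)$ with $\|D^2u_\infty\|_{L^\infty}\le\|D^2u\|_{L^\infty(B_1)}$.

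The heart of the argument is to show that $u_\infty$ is a quadratic polynomial. Its phase is now constant, so its gradient graph is a special Lagrangian (calibrated, hence area-minimizing) submanifold of $\com^n$ with bounded second fundamental form. Using the monotonicity formula for this calibrated graph, the density ratios are monotone and a tangent cone at infinity exists; by the scaling $u_\infty(Ry)/R^2$ it corresponds to a degree-two homogeneous solution of the same uniformly elliptic special Lagrangian equation, smooth away from the origin. This is exactly the setting of the rigidity result quoted in the introduction, which in dimensions $n\le 4$ forces the homogeneous limit to be quadratic, i.e. the tangent cone at infinity is a Lagrangian plane of unit density. Since every point of the smooth graph also has density one, monotonicity pins the density identically equal to one, and the equality case forces the graph to be a plane; equivalently $u_\infty$ is quadratic and $D^2u_\infty\equiv A$ is constant. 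This is the step where the dimensional restriction $n\le 4$ is essential.

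It remains to transfer the oscillation lower bound to the limit, which I expect to be the main obstacle: the $C^{1,1}$ bound yields only weak-$*$ convergence of $D^2u_k$, and, the arctangent operator being neither convex nor concave in the subcritical range, no $C^2$ estimate is available. I would resolve this at the linear level: $w_k=u_k-u_\infty$ solves the uniformly elliptic equation $a_k^{ij}D_{ij}w_k=\Theta_k-\Theta(x_\ast)$ with $a_k^{ij}=\int_0^1 F_{ij}\big(sD^2u_k+(1-s)D^2u_\infty\big)\,ds$, where both $\|w_k\|_{L^\infty(B_2)}\to 0$ and $\|\Theta_k-\Theta(x_\ast)\|_{L^\infty(B_2)}\to 0$. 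The interior $W^{2,\delta}$ estimate for uniformly elliptic equations in non-divergence form then gives $\|D^2w_k\|_{L^\delta(B_1)}\to 0$; combined with the uniform bound $\|D^2u_k\|_{L^\infty}\le\|D^2u\|_{L^\infty(B_1)}$ and dominated convergence along a subsequence, this upgrades to $D^2u_k\to A$ in $L^1(B_1)$. Hence $\frac{1}{|B_1|}\int_{B_1}|D^2u_k-(D^2u_k)_{0,1}|\to 0$, contradicting the lower bound $\e_0$, so $D^2u\in\VMO(B_{1/2})$ with the stated modulus. The two points I expect to require the most care are this strong Hessian convergence and the verification that the tangent cone at infinity is regular enough off the origin for the homogeneous rigidity to be applicable.
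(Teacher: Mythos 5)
Your proposal is correct and follows essentially the same route as the paper: a contradiction/blow-up argument in which the rescaled solutions converge to an entire solution of the constant-phase special Lagrangian equation, rigidity of that limit via the monotonicity formula, Allard regularity, and the quoted homogeneity rigidity ($n\le 4$), and then transfer of the Hessian oscillation to the limit through the interior $W^{2,\delta}$ estimate (the paper invokes \cite[Proposition 7.4]{CC} exactly as you do with your linearized equation for $w_k$). Your rendering of the rigidity step --- blow-down to a degree-two homogeneous solution, then pinning the density to one to conclude the limit itself is quadratic --- is just the spelled-out form of the Yuan argument the paper repeats, and the cone-regularity point you flag as delicate is handled in the paper by the same dimension-reduction-plus-Allard reasoning you anticipate.
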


\begin{proof}
We prove by contradiction. Denote \[|D^2u|_{L^{\infty}(B_1)}=\Lambda\] and assume the contrary is true. There exists $\varepsilon>0$ and sequences $\{x_k\rightarrow x_0\}\subset B_{1/2}$, $\{r_k \rightarrow 0\}$, along with a family of $C^{1,1}$ viscosity solutions $\{u_k\}$ of (\ref{rs}) satisfying \[|D^2 u_k|\leq \Lambda,\] such that 
\[
\frac{1}{|B_{r_k}|}\int_{B_{r_k}}|D^2 u_k-(D^2 u_k)_{ x_k,r_k}|\geq \varepsilon.
\] 
Next, blow up $\{u_k\}$. For $| y|\leq \frac{1}{r_k}$, we define
\[ v_k( y)=\frac{ u_k( x_k+r_k y)-\nabla  u_k( x_k)\cdot r_k  y- u_k( x_k)}{r_k^2}.
\]
Observe that $ v_k$ is a viscosity solution of
\[
\sum_{i=1}^n\arctan\lambda_i(D^2 v_k( y))=\Theta( x_k+r_k y).
\]
Since $\Theta$ is H\"{o}lder continuous, the right-hand side converges uniformly to the constant value $\Theta( x_0)$. 
For any fixed $s > 0$, the bound $|D^2 v_k|_{L^{\infty}(B{r_k})} \leq \Lambda$ allows us to extract a $C^{1,\alpha}(B_s)$ convergent subsequence.   
By the diagonalization method, we obtain a subsequence, still denoted by $v_k$, that converges locally uniformly in $C^{1,\alpha}$ on $\mathbb{R}^n$ to  $v$ as $k \to \infty$.
Viscosity solutions are closed under $C^0$ uniform limits and locally uniformly convergent, uniformly elliptic sequences of equations \cite[Proposition 2.9]{CC}, so on any fixed ball, $ v$ is a viscosity solution of the special Lagrangian equation 
\begin{equation}
\sum_{i=1}^n\arctan\lambda_i(D^2 v(y))=\Theta( x_0). \label{slag2}
\end{equation}
Since we also have convergence in $W^{2,p}_{\text{loc}}$, we may apply the $W^{2,\delta}$ estimate from \cite[Proposition 7.4]{CC} to obtain
\[ ||D^2 v_k-D^2 v||_{L^{\delta}(B_{s/2})}\leq C(s)|| v_k- v||_{L^{\infty}(B_s)}.\]
The right-hand side tends to zero as $k \to \infty$. Given that  $|D^2 v_k|$ and $|D^2 v|$ are uniformly bounded, it then follows that for any $p > 0$,
\[||D^2 v_k-D^2 v||_{L^{p}(B_{s/2})} \text{ tends to } 0 \text{ as } k\rightarrow\infty.\]
\smallskip

Next, we show that $v$ is a quadratic polynomial. \\

Case 1: $n=3$. The proof follows from the argument presented on Pg 264 in \cite{YY}. We repeat it here for the sake of completion. Since $v$ is a viscosity solution of the special Lagrangian equation \eqref{slag2}, the gradient graph $(x, v(x))$ is volume minimizing in $\mathbb{R}^3$. Observe that $D^2v_k$
converges to $D^2v$ in $W^{2,3}_{loc}(\mathbb{R}^3)$. Therefore, the gradient graph of $v$ is a cone by the monotonicity formula \cite[Pg 84]{sim} and \cite[Theorem 19.3]{sim}. Since the tangent cone of the gradient graph of $v$ at each point away from the vertex is a 2-dimensional cone cross $\mathbb{R}^1$ \cite[Lemma 35.5]{sim}, the 2-dimensional cone is a linear space by the arguments outlined in the proof of Lemma 2.1 in \cite{YY}. Applying Allard’s regularity result \cite[Theorem 24.2]{sim}
the gradient graph of $v$ is smooth away from the origin. Using \cite[Lemma 2.1]{YY}, we see that $v$ is a quadratic polynomial.
\smallskip

Case 2: $n=4$. This follows similarly if one applies \cite[Theorem 1.1]{NV2} in place of \cite[Lemma 2.1]{YY} to the uniformly elliptic special Lagrangian equation \eqref{slag2}.

\smallskip

Proceeding with the proof, we now combine the  $W^{2,p}_{loc}$ convergence with the fact that $v$ is a quadratic polynomial to obtain

\begin{align*}
0&=\frac{1}{|B_1|}\int_{B_1}|D^2 v-(D^2 v)_{0,1}|\\
&=\lim_{k\rightarrow \infty}\frac{1}{|B_1|}\int_{B_1}|D^2 v_k-(D^2 v_k)_{0,1}|\\
&=\lim_{k\rightarrow \infty}\frac{1}{|B_{r_k}|}\int_{B_{r_k}}|D^2 u_k-(D^2 u_k)_{ x_k,r_k}|\\
&\ge \varepsilon,
\end{align*}
which is a contradiction. 
\end{proof}

\begin{proof} of Theorem \ref{main1}:
The result for $n=2$ follows from \cite{BW2}. We present the proof for the case $n\in\{3,4\}$.
    \ Since $|D^2u|$ is uniformly bounded almost everywhere, equation (\ref{hstat0}) is uniformly elliptic. By the De Giorgi-Nash Theorem, $\Theta \in C^{\alpha}(B_{5/6})$ and noting that $|\Theta|\leq n\pi/2$, we get the following  estimate
    \begin{equation*}
        ||\Theta||_{C^{\alpha}(B_{5/6})}\leq C(||D^2u||_{L^{\infty}(B_{1})}).\label{DGN}
    \end{equation*}
    This means $u\in C^{1,1}$ solves the uniformly elliptic second order fully nonlinear equation \eqref{rs}, with $\Theta \in C^{\alpha}$. By \cite[Corollary 3]{Lio83}, the weak solution $u$ is also a viscosity solution of \eqref{rs}.
    By the above Proposition, $D^2u\in \VMO(B_{4/5})$. Applying \cite[Theorem 2.2]{BS1}, we get $u\in C^{2,\alpha}(B_{3/4})$. Combining this with the above estimate, we get a uniform $C^{2, \alpha}$ estimate for $u$.

    Next, one can apply the bootstrapping argument outlined on Pg 348-349 in \cite{ChenWarren} to obtain smoothness and the desired  $C^{k,\alpha}$ estimate of Theorem \ref{main1}. We briefly describe the argument here for the convenience of the readers.
    Having established that $u \in C^{2, \alpha}$, we consider the divergence-type equation satisfied by $\Theta$, whose coefficients are now in $C^{\alpha}$. By \cite[Theorem 3.13]{HanLin}, this yields $\Theta \in C^{1,\alpha}(B_{2/3})$. We then analyze the equation satisfied by a difference quotient of $\Theta$: For $e_{k\text{ }}$, consider the function
\[
\Theta^{h_{k}}(x)=\frac{\Theta(x+he_{k\text{ }})-\Theta(x)}{h}%
\]
defined on some interior region, for small $h>0.$ Since $\Theta\in
C^{1,\alpha}(B_{2/3})  $, it follows that 
$
| \Theta^{h_{k}}| _{C^{\alpha}(B
_{2/3 -h})  }$ is uniformly bounded. Observe that 
\begin{align*}
\Theta^{h_{k}}(x)  &  =\frac{1}{h}\int_{0}^{1}\frac{d}{dt}F(tD^{2}%
u(x+he_{k\text{ }})+(1-t)D^{2}u(x))dt\\
&  =\frac{1}{h}\int_{0}^{1}g^{ij}\left(  tD^{2}u(x+he_{k\text{ }}%
)+(1-t)D^{2}u(x)\right)  \left(  u_{ij}(x+he_{k\text{ }})-u_{ij}(x)\right)
dt\\
&  =\int_{0}^{1}g^{ij}\left(  tD^{2}u(x+he_{k\text{ }})+(1-t)D^{2}u(x)\right)
\left(  \frac{u_{ij}(x+he_{k\text{ }})-u_{ij}(x)}{h}\right)  dt\\
&  =G^{ij}u_{ij}^{(h_{k})}(x)
\end{align*}
for some uniformly elliptic operator $G^{ij}\partial_{i}\partial_{j}$, with $C^{\alpha}$ coefficients. 
So each $u^{(h_k)}$ satisfies a uniformly elliptic equation of non-divergence type, with right-hand side $\Theta^{h_k} \in C^{\alpha}(B_{2/3 - h})$ and Hölder norms uniformly bounded in $h$. Since each $u^{(h_k)} \in C^{2,\alpha}$, one can apply the non-divergence Schauder estimates \cite[Theorem 6.6]{GT} to obtain a uniform $C^{2,\alpha}$ bound as $h \to 0$.
So for each $1 \leq k \leq n$, we obtain a uniform bound on $||u_k||_{C^{2,\alpha}(B_{1/2})}$, which implies that $u \in C^{3,\alpha}(B_{1/2})$ and $g \in C^{1,\alpha}(B_{1/2})$, with uniform bounds. Using the relation $\Delta_g \theta = 0$, we then deduce
\[
\sqrt{g}g^{ij}\Theta_{ij}=-\partial_{i}( \sqrt{g}g^{ij})
\Theta_{i}\in C^{\alpha}(B_{1/2}).
\] 
Since $\Theta$ satisfies a non-divergence form equation with a Hölder continuous right-hand side, by Schauder theory \cite[Theorem 6.13]{GT}, it follows that $\Theta \in C^{2,\alpha}$. Iterating the previous steps yields higher order regularity estimates in any smaller subregion of the interior of $B_1$.

\end{proof}

\begin{proof} of Corollary \ref{cor1}:
    The above proof holds good in all dimensions, provided Proposition \ref{prop:VMO} holds. In dimensions $n> 4$, the Proposition is valid if $\arctan\lambda_{\min}>(\Theta-\pi)/n$ by \cite[Corollary 1.1]{OY}. 
\end{proof}

\subsection{Comparison with variational formulation }
We outline the analytic framework for the geometric variational problem and provide some remarks comparing equation \eqref{hstat0} with the variational form of the Hamiltonian stationary equation described below.

Let $\Omega \subset \mathbb{R}^n$ be a fixed bounded domain, and let $u : \Omega \rightarrow \mathbb{R}$ be a smooth function. The gradient graph of $u$, $L_{u}=\left\{  \left(  x,Du(x)\right)  :x\in\Omega\right\}  $ is a Lagrangian
$n$-dimensional submanifold in $\mathbb{C}^{n}$, with respect to the complex
structure $J$ defined by the complex coordinates $z_{j}=x_{j}+\sqrt{-1}y_{j}$
for $j=1,\cdots,n$. The volume of $L_{u}$ is given by
\[
F_{\Omega}(u)=\int_{\Omega}\sqrt{\det\left(  I_n+(D^{2}u)^{2}\right)
}dx.
\]
A twice differentiable function $u$ is a critical point of $F_{\Omega}(u)$ under
compactly supported variations of the scalar function $u$ if and only if $u$
satisfies the Euler-Lagrange equation
\begin{equation}
\int_{\Omega}\sqrt{\det g}g^{ij}\delta^{kl}u_{ik}\eta_{jl}\,dx=0\text{
}\ \ \ \text{for all }\eta\in C_{c}^{\infty}(\Omega). \label{vHamstat}%
\end{equation}
The functional $F_\Omega(u)$ is well-defined for functions $u \in W^{2,n}(\Omega)$, making this Sobolev space a natural setting in which to search for critical points. A function $u \in W^{2,n}(\Omega)$ is said to be a weak solution to the variational Hamiltonian stationary equation if it satisfies \eqref{vHamstat}.

However, working directly with the above fourth order equation in double divergence form typically yields much weaker regularity results. A primary reason is that the existing fourth order regularity theory developed in works such as \cite{BW1, BCW, ABfourth, BhaSko} requires the equation to be uniformly elliptic. In other words, one requires the coefficient matrix $\sqrt{\det g}g^{ij}\delta^{kl}$ to satisfy the Legendre ellipticity condition, i.e. for a constant $\Lambda>0$ \begin{equation*}
\label{elliptic:Intro}\frac{\partial}{\partial u_{ik}} \sqrt{\det g}g^{ij}\delta^{kl}(\xi)\sigma
_{ij}\sigma_{kl}\geq\Lambda\left\Vert \sigma\right\Vert ^{2},\text{ $\forall$
}\sigma\text{ $\in S^{n\times n}$ and $\xi\in U$}
\end{equation*} where $S^{n \times n}$ denotes the space of symmetric $n \times n$ matrices. This condition is satisfied by solutions of equation \eqref{vHamstat} whenever the Hessian of $u$ satisfies the smallness condition $||D^2 u||_{L^\infty} \leq 1 - \eta$ for some $\eta > 0$.

On a general Calabi–Yau manifold, although the Lagrangian phase function $\Theta$ remains locally well-defined, it can no longer be written as a sum of arctangent terms, even in Darboux coordinate charts where $L_u$ is expressed as a gradient graph.
 In \cite{BCW}, to establish smoothness of $C^1$-regular Hamiltonian stationary Lagrangian submanifolds in a symplectic manifold, the authors directly dealt with the critical point of
the volume of $L_u$ in an open ball $B\subset\mathbb{R}^{2n}$ equipped
with a Riemannian metric, among nearby competing gradient graphs $L_{t}=\{(x, Du(x)
+t D\eta(x)):x\in\Omega\}$ for compactly supported smooth functions $\eta$. This approach required analyzing a more general class of fourth order equations, structurally similar to \eqref{vHamstat}, but with dependency on lower-order terms arising from the geometric setting. Consequently, outside the Euclidean setting, one is forced to work with the double divergence formulation \eqref{vHamstat}. In contrast, in the Euclidean case, the decomposition into two second order operators, as in \eqref{hstat0} provides stronger results. It is worth noting that the equivalence between these two formulations in $\mathbb{C}^n$ has been established by Chen–Warren under the Hessian smallness condition $||D^2 u||_{L^\infty} \leq 1 - \eta$, see \cite[Theorem 1.1]{ChenWarren}.

\bibliographystyle{amsalpha}
\bibliography{AMS}

\end{document}